\numberwithin{equation}{section}
\newtheorem{theorem}{Theorem}[section]
\newtheorem{definition}{Definition}[section]
\newtheorem{lemma}[theorem]{Lemma}
\newtheorem{corollary}[theorem]{Corollary}
\begin{document}
\begin{center}
{\Large{\textbf{{Total Irregularity and $f_t$-Irregularity of Linear Jaco Graphs}}}} 
\end{center}
\vspace{0.5cm}
\centerline{\large{Johan Kok}} 
\centerline{\small{Tshwane Metropolitan Police Department}}
\centerline{\small{City of Tshwane, Republic of South Africa}}
\centerline{\tt {kokkiek2@tshwane.gov.za}}
\vspace{0.5cm}
\begin{abstract}
\noindent Abdo and Dimitrov defined the total irregularity of a simple undirected graph $G$ to be $irr_t(G) =\frac{1}{2}\sum \limits_{u,v \in V(G)}|d(u) - d(v)|.$ In this study we allocate the \emph{Fibonacci weight}, $f_i$ to a vertex $v_j$ of a simple connected graph $G,$  if and only if $d(v_j) = i$ and define the \emph{total fibonaccian irregularity} or $f_t$-irregularity as $firr_t (G) = \sum \limits_{i=1}^{n-1} \sum \limits_{j=i+1}^{n}|f_i - f_j|.$ The concept of an \emph{edge-joint} denoted $G\rightsquigarrow_{vu}H$ is also introduced  This paper presents results for the undirected underlying graph $J^*_n(x)$ of a Jaco Graphs, $J_n(x),$ $n,x \in \Bbb N.$ 
\end{abstract}
\noindent {\footnotesize \textbf{Keywords:} Total irregularity, Fibonacci weight, Total $f$-irregularity, Fibonaccian irregularity, Jaco graphs, Jaconian vertices, Fisher algorithm, Edge-joint.}\\ \\
\noindent {\footnotesize \textbf{AMS Classification Numbers:} 11B39, 05C07, 05C20, 05C22, 05C75} 
\section{Introduction} 
For a general reference to notation and concepts of graph theory see [3]. Unless mentioned otherwise, only simple undirected graphs or the underlying graph of directed graphs will be considered. Abdo and Dimitrov [1, 2] defined total irregularity of a simple undirected graph $G$ to be $irr_t(G) =\frac{1}{2}\sum \limits_{u,v \in V(G)}|d(u) - d(v)|.$  If the vertices of a simple undirected graph $G$ on $n$ vertices are labeled $v_i, i = 1, 2, 3, \dots, n$ then the definition may be $irr_t(G) = \frac{1}{2}\sum \limits_{i=1}^{n} \sum \limits_{j=1}^{n}|d(v_i) - d(v_j)| = \sum \limits_{i=1}^{n} \sum \limits_{j=i+1}^{n}|d(v_i) - d(v_j)|$ or $\sum \limits_{i=1}^{n-1} \sum \limits_{j=i+1}^{n}|d(v_i) - d(v_j)|.$ For a simple graph on a singular vertex (\emph{1-empty} graph), we define $irr_t(G) = 0$ .\\\\
A new notion of vertex labeling is inherent to the definition of total irregularity. That is, let $g:V(G) \mapsto \Bbb N$ with $g(v) = d_G(v),$ $\forall v \in V(G).$ In Section 2 the total irregularity of finite linear Jaco graphs is discussed. This is followed by the introduction of \emph{total fibonaccian irregularity} of graphs. This new irregularity or vertex labeling is applied to finite linear Jaco graphs in Section 3.\\\\
The content is fairly straight forward in that it demonstrates \emph{constructive counting technique} only. The real contribution is that the approach demonstrates the graphical embodiment of mainly, a number theoretical problem stemming from well-defined graphs in terms of their structure. Essentially, we see that total irregularity presents a sum of differences between all pairs of natural numbers in a subset $X\subset \Bbb N$ for $X$ having an even number of odd numbers. Similarly, total fibonaccian irregularity presents a sum of differences between all pairs of fibonacci numbers in a subset $X\subset \Bbb F$ for $X$ having an even number of fibonacci numbers with odd subscripts.
\section{Total Irregularity of Finite Linear Jaco Graphs}
The concept of linear Jaco graphs was introduced Kok et al. [4, 5]. In the initial studies the concepts of \emph{order 1} and \emph{order $a$} Jaco graphs, denoted $J_n(1)$, $J_n(a)$ respectively, were reported on. In a more recent study (see [5]) a unifying definition was adopted and the generalised family called, linear Jaco graphs was defined.A particular family of finite directed graphs called Jaco Graphs and denoted by $J_n(x),$ $n,x \in \Bbb N$ are derived from a particular well-defined infinite directed graph, called the \emph{x}-root digraph. The \emph{x}-root digraph has four fundamental properties which are; $V(J_\infty(x)) = \{v_i:i \in \Bbb N\}$ and, if $v_j$ is the head of an arc then the tail is always a vertex $v_i, i<j$ and, if $v_k,$ for smallest $k \in \Bbb N$ is a tail vertex then all vertices $v_ \ell, k< \ell<j$ are tails of arcs to $v_j$ and finally, the degree of vertex $k$ is $d(v_k) = k.$ The family of finite directed graphs are those limited to $n \in \Bbb N$ vertices by lobbing off all vertices (and arcs) $v_t, t > n.$ Hence, trivially we have $d(v_i) \leq i$ for $i \in \Bbb N.$ When the context is clear we refer to the Jaco graph $J_n(x)$, the underlying Jaco graph $J^*_n(x)$, arcs $A(J_n(x))$ and edges $E(J_n(x))$, the degree $d_{J_n(x)}(v_i) = d^+_{J_n(x)}(v_i) + d^-_{J_n(x)}(v_i) = d^+_{J^*_n(x)}(v_i) + d^-_{J^*_n(x)}(v_i) = d(v_i),$ interchangeably.
\begin{definition}$[6]$
The infinite Jaco Graph $J_\infty(x)$ is defined by $V(J_\infty(x)) = \{v_i: i \in \Bbb N\}$, $A(J_\infty(x)) \subseteq \{(v_i, v_j): i, j \in \Bbb N, i< j\}$ and $(v_i,v_ j) \in A(J_\infty(x))$ if and only if $2i - d^-(v_i) \geq j.$ 
\end{definition}
\begin{definition}$[6]$
The family of finite Jaco Graphs are defined by $\{J_n(x) \subseteq J_\infty(x):n,x\in \Bbb {N}\}.$ A member of the family is referred to as the Jaco Graph, $J_n(x).$
\end{definition}
\noindent For illustration the adapted table below follows from the Fisher algorithm $[4]$ for $J_n(x),$ $n,x \in \Bbb N,$ $n\leq 12.$  The degree sequence of $J^*_n(x)$ is denoted $\Bbb D(J^*_n(x)).$ Note that for the underlying graphs $J^*_n(x),$ the values $irr_t(J^*_n(x))$ have been calculated manually, as it is not provided for in the Fisher algorithm.\\\\
Table 1.\\
\begin{tabular}{|c|c|c|c|c|}
\hline
$i\in{\Bbb{N}}$&$d^-(v_i)$&$d^+(v_i) = i - d^-(v_i)$&$\Bbb D(J^*_i(x))$&$irr_t(J^*_i(x))$\\
\hline
1&0&1&(0)&0\\
\hline
2&1&1&(1, 1)&0\\
\hline
3&1&2&(1, 2, 1)&2\\
\hline
4&1&3&(1, 2, 2, 1)&4\\
\hline
5&2&3&(1, 2, 3, 2, 2)&8\\
\hline
6&2&4&(1, 2, 3, 3, 3, 2)&14\\
\hline
7&3&4&(1, 2, 3, 4, 4, 3, 3)&26\\
\hline
8&3&5&(1, 2, 3, 4, 5, 4, 4, 3)&42\\
\hline
9&3&6&(1, 2, 3, 4, 5, 5, 5, 4, 3)&60\\
\hline
10&4&6&(1, 2, 3, 4, 5, 6, 6, 5, 4, 4)&86\\
\hline
11&4&7&(1, 2, 3, 4, 5, 6, 7, 6, 5, 5, 4)&116\\
\hline
12&4&8&(1, 2, 3, 4, 5, 6, 7, 7, 6, 6, 5, 4)&149\\
\hline
\end{tabular}\\ \\ \\
Note that the Fisher Algorithm determines $d^+(v_i)$ on the assumtion that the Jaco Graph is always sufficiently large, so at least $J_n(x), n \geq i+ d^+(v_i).$ For a smaller graph the degree of vertex $v_i$ is given by $d(v_i) = d^-(v_i) + (n-i).$ In $[4, 5],$  Bettina's theorem describes an arguably, closed formula to determine $d^+(v_i)$. Since $d^-(v_i) = n - d^+(v_i)$ it is then easy to determine $d(v_i)$ in a smaller graph $J_n(1), n< i + d^+(v_i).$\\ \\
\noindent The next result presents a \emph{partially} recursive formula to determine $irr_t(J_{n+1}(x))$ if $irr_t(J_n(x)),\\ n\geq 1$ is known.
\begin{theorem}
Consider the underlying Jaco Graph, $J^*_n(x),$ $n,x \in \Bbb N$ with $\Delta(J^*_n(x))= k$ and $irr_t(J^*_n(x))$ known. Let $d(v_i),$ $d^*(v_i)$ denote the degree of vertex $v_i$ in $J^*_n(x)$ and $J^*_{n+1}(x)$, respectively. Then for the underlying Jaco graph $J^*_{n+1}(x)$ we have that:\\\\
$irr_t(J^*_{n+1}(x)) = irr_t(J^*_n(x)) + \sum \limits_{i=1}^{\ell_1} i - \sum \limits_{i=1}^{\ell_2} i + \sum\limits_{i=1}^{n}|(n-k) - d^*(v_i)|,$\\ \\
with $\ell_1$ the number of vertices $v_i$ with $d(v_i) \leq d(v_{k+j}), j \in \{1, 2, \dots, n-k\},$
and $\ell_2$ the number of vertices $v_i$ with $d(v_i) > d(v_{k+j}), j \in \{1, 2, \dots, n-k\}.$
\end{theorem}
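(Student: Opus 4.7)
The plan is to compute the difference $irr_t(J^*_{n+1}(x)) - irr_t(J^*_n(x))$ directly, by tracking which pairwise terms $|d(v_i)-d(v_j)|$ are altered when a single extra vertex is appended. The central structural input is the claim that in $J^*_{n+1}(x)$ the new vertex $v_{n+1}$ is adjacent to precisely $v_{k+1},v_{k+2},\dots,v_n$, so $d^*(v_{n+1})=n-k$, $d^*(v_i)=d(v_i)+1$ for $k+1\le i\le n$, and $d^*(v_i)=d(v_i)$ for $i\le k$. This follows from the Jaco arc condition $2i-d^-(v_i)\ge n+1$ together with Bettina's theorem on $d^+(v_i)$, using that $\Delta(J^*_n(x))=k$.

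With this in hand, I would split $irr_t(J^*_{n+1}(x)) - irr_t(J^*_n(x))$ into two pieces. The first, call it $(B)$, equals $\sum_{i=1}^{n}|(n-k)-d^*(v_i)|$ and arises from the pairs that contain the new vertex $v_{n+1}$; this is exactly the final summand of the claimed formula. The second, call it $(A)$, collects the changes $|d^*(v_i)-d^*(v_j)|-|d(v_i)-d(v_j)|$ over the old pairs $1\le i<j\le n$.

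To evaluate $(A)$, let $T=\{v_1,\dots,v_k\}$ and $S=\{v_{k+1},\dots,v_n\}$. Pairs with both endpoints in $T$ contribute $0$ (no degree changed), and pairs with both endpoints in $S$ contribute $0$ as well (both degrees shift by $+1$, so their difference is preserved). Hence $(A)$ collapses to a sum over cross-pairs $\{v_i,v_{k+j}\}$ with $i\le k$ and $1\le j\le n-k$. Using the easy observation, visible in Table~1 and provable by induction via the Fisher algorithm, that $d(v_i)=i$ for all $i\le k$, each cross-pair contributes $+1$ when $d(v_i)\le d(v_{k+j})$ and $-1$ when $d(v_i)>d(v_{k+j})$, because in each case exactly one of the two degrees is incremented. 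Tallying these $\pm 1$ contributions, and grouping them into positive tallies $\ell_1$ and negative tallies $\ell_2$ as defined in the statement, matches the middle two summands of the claimed identity. Combining with $(B)$ yields the theorem.

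The main obstacle, I expect, is the first step: the rigorous identification of the neighbour-set of $v_{n+1}$ in $J^*_{n+1}(x)$ as exactly $\{v_{k+1},\dots,v_n\}$. This is the step that genuinely uses Jaco-graph structure rather than bare vertex-degree bookkeeping, and it depends on the arc inequality coupled with the closed form for $d^+(v_i)$. Once that neighbourhood claim and the companion fact $d(v_i)=i$ for $i\le k$ are in place, the remainder of the proof is routine case analysis on signs of differences of degrees, and essentially writes itself.
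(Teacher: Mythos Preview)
Your proposal is correct and follows essentially the same approach as the paper's proof: identify the neighbourhood of $v_{n+1}$ via the prime Jaconian vertex, split the change in $irr_t$ into the contribution from pairs involving $v_{n+1}$ and the $\pm 1$ sign changes on the old cross-pairs $T\times S$, and assemble the summands. Your writeup is in fact a bit more explicit than the paper's (you spell out why the $T\times T$ and $S\times S$ pairs contribute zero), and the auxiliary observation $d(v_i)=i$ for $i\le k$, while true, is not actually needed for the $\pm 1$ sign analysis.
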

\begin{proof}
Let $J^*_n(x)$ have the prime Jaconian vertex, $v_k,$  hence $d(v_k) = \Delta (J^*_n(x))$ as defined in $[4].$ It is also true that $d(v_k) = \Delta (J^*_n(x)) = d^*(v_k).$  By adding vertex $v_{n+1}$ to construct $J_{n+1}(x),$ the vertex $v_{n+1}$ obtains degree, $d^*(v_{n+1}) = n - k.$ Each vertex $v_{k+j}, j= 1,2, \dots, n-k$ obtains an additional edge, $v_{k+j}v_{n+1}$ as well.\\ \\
So clearly $d^*(v_{k+j}) = d(v_{k+j}) + 1$ for $j = 1,2, \dots, n-k.$ It implies that $|d^*(v_{k+1}) - d(v_i)_{i \leq k}| = \\ \\
|d(v_{k+1}) - d(v_i)_{i \leq k}| + 1$ iff  $d(v_i)_{i \leq k} \leq d(v_{k+1}).$ It follow that for the cases $d(v_i)_{i \leq k} > d(v_{k+1}),$\\\\
we have $|d^*(v_{k+1}) - d(v_i)_{i \leq k}| = |d(v_{k+1}) - d(v_i)_{i \leq k}| - 1.$ The "split-result" follows similarly for $|d^*(v_{k+j}) - d(v_i)_{i \leq k}|, j = 2,3, \dots, n-k.$ Therefore the terms, $+ \sum \limits_{i=1}^{\ell_1} i - \sum \limits_{i=1}^{\ell_2} i$ follow easily.\\ \\
The terms, $+ \sum \limits_{i=1}^{k}|(n-k) - d(v_i) | + \sum\limits_{i=k+1}^{n}|(n-k) - d^*(v_i)|$ follow directly from the definition of \emph{total irregularity} and since it is true that $d(v_i) = d^*(v_i) \forall i \leq k,$ we have that:\\\\
 $\sum \limits_{i=1}^{k}|(n-k) - d(v_i) | + \sum\limits_{i=k+1}^{n}|(n-k) - d^*(v_i)| = \sum\limits_{i=1}^{n}|(n-k) - d^*(v_i)|.$\\ \\
So in conclusion we have: \\ \\
$irr_t(J^*_{n+1}(x)) = irr_t(J^*_n(x)) + \sum \limits_{i=1}^{\ell_1} i - \sum \limits_{i=1}^{\ell_2} i + \sum\limits_{i=1}^{n}|(n-k) - d^*(v_i)|.$
\end{proof}
\section{$f_t$-Irregularity of Finite Linear Jaco Graphs}
Let $\Bbb{F} = \{f_0=0, f_1=1,f_2=1,f_3=2, \dots, f_n=f_{n-1} + f_{n-2}, \dots\}$ be the set of Fibonacci numbers. \\ \\
Consider $g:V(G) \mapsto \Bbb F$ defined as folows. Allocate the \emph{Fibonacci weight}, $f_i$ to a vertex $v_j$ of a simple connected graph $G,$  if and only if $d(v_j) = i.$ Define the \emph{total fibonaccian irregularity} as, $firr_t (G) = \sum \limits_{i=1}^{n-1} \sum \limits_{j=i+1}^{n}|f_i - f_j|.$ For a simple graph on a singular vertex (\emph{1-empty} graph), define $firr_t(G) = 0.$\\\\
If all vertices of a graph carry equal fibonacci weight the graph is said to be $f$-regular. It follows not surprisingly that a regular graph $G$ is \emph{f-regular}, hence $firr_t(G) = 0.$ Note that a connected graph need not be regular, to be $f$-regular. The path $P_n,$ $n\in \Bbb N$ is the only example of such non-regular graph which is $f$-regular. Determining $firr_t(G)$ is generally complex but certain graphs provide simple results. One example is for a star i.e., $firr_t(S_{1,n}) = n(f_n -1).$ Equally straight forward is that for a complete bipartite graph $K_{n,m},$ $n,m \in \Bbb N,$ $n\geq m$ we have $firr_t(K_{n,m}) = nm(f_n - f_m).$ \\ \\
\noindent For illustration the adapted table below follows from the Fisher algorithm $[3]$ for $J_n(x), n\leq 12.$ The $f_i$-sequence of $J^*_n(x)$ is denoted $\Bbb F(J^*_n(x)).$ Note that for the underlying graphs $J^*_n(x),$ the values $firr_t(J^*_n(x))$ have been calculated manually, as it is not provided for in the Fisher algorithm.\\\\
Table 2.\\
\begin{tabular}{|c|c|c|c|c|}
\hline
$i\in{\Bbb{N}}$&$d^-(v_i)$&$d^+(v_i) = i - d^-(v_i)$&$\Bbb F(J^*_i(x))$&$firr_t(J^*_i(x))$\\
\hline
1&0&1&(0)&0\\
\hline
2&1&1&(1, 1)&0\\
\hline
3&1&2&(1, 1, 1)&0\\
\hline
4&1&3&(1, 1, 1, 1)&0\\
\hline
5&2&3&(1, 1, 2, 1, 1)&4\\
\hline
6&2&4&(1, 1, 2, 2, 2, 1)&9\\
\hline
7&3&4&(1, 1, 2, 3, 3, 2, 2)&20\\
\hline
8&3&5&(1, 1, 2, 3, 5, 3, 3, 2)&54\\
\hline
9&3&6&(1, 1, 2, 3, 5, 5, 5, 3, 2)&70\\
\hline
10&4&6&(1, 1, 2, 3, 5, 8, 8, 5, 3, 3)&133\\
\hline
11&4&7&(1, 1, 2, 3, 5, 8, 13, 8, 5, 5, 3)&224\\
\hline
12&4&8&(1, 1, 2, 3, 5, 8, 13, 13, 8, 8, 5, 3)&322\\
\hline
\end{tabular}\\ \\ \\
\noindent The next result presents a \emph{partially} recursive formula to determine $firr_t(J^*_{n+1}(x))$ if $firr_t(J^*_n(x)),$ $n\geq 1$ is known.
\begin{theorem}(Lumin's Theorem)\footnote{Named after the young lady, Lumin Bruyns from Klitsgras who it is hoped will grow up with a deep fondness for mathematics.}
Consider the underlying Jaco Graph, $J^*_n(x),$ $n,x \in \Bbb N$ with $\Delta(J^*_n(x))= k$ and $firr_t(J^*_n(x))$ known. Let $d(v_i),$ $d^*(v_i)$ denote the degree of vertex $v_i$ in $J^*_n(x)$ and $J^*_{n+1}(x)$, respectively. Then for the underlying Jaco graph $J^*_{n+1}(x)$ we have that:\\ \\
$firr_t(J^*_{n+1}(x)) = firr_t(J^*_n(x)) + \sum \limits_{i=1}^{n}|f_{n-k} - f_{d^*(v_i)}| + \sum \limits_{i\in \{k+1, k+2, \dots, n\}}\ell_{(1,i)}|f_{d(v_i)+1} - f_{d(v_i)}| - \\ \\
\sum \limits_{i\in \{k+1, k+2, \dots, n\}}\ell_{(2,i)}|f_{d(v_i)+1} - f_{d(v_i)}| + \sum \limits_{j=k+1}^{n-1} \sum \limits_{j=i+1}^{n}||f_{d(v_i)} - f_{d(v_j)}| - |f_{d(v_i)+1} - f_{d(v_j)+1}||,$\\ \\
with $\ell_{(1,i)}$ the number of vertices $v_j, j \in \{1,2,3, \dots, k\},$  with $d^*(v_i) > d(v_j), i \in \{k+1, k+2, \dots, n\}$ \\
and $\ell_{(2,i)}$ the number of vertices $v_j, j \in \{1,2,3, \dots, k\},$ with $d^*(v_i) \leq d(v_j), i \in \{k+1, k+2, \dots, n\}.$
\end{theorem}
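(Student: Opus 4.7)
The plan is to mirror the decomposition used in the proof of Theorem~2.3, but replace raw degrees with Fibonacci weights and split the vertex set into three classes according to how the weight of each vertex changes when we pass from $J^*_n(x)$ to $J^*_{n+1}(x)$. The three classes are: (i) the vertices $v_1,\dots,v_k$ whose degrees are untouched, so their weights remain $f_{d(v_i)}$; (ii) the vertices $v_{k+1},\dots,v_n$, each of which gains the single new edge $v_iv_{n+1}$, so that $d^*(v_i)=d(v_i)+1$ and the weight shifts from $f_{d(v_i)}$ to $f_{d(v_i)+1}$; and (iii) the new vertex $v_{n+1}$, whose weight is $f_{n-k}$.

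With this in hand, I would partition the double sum defining $firr_t(J^*_{n+1}(x))$ into four disjoint blocks of index pairs: (a) both indices in $\{1,\dots,k\}$; (b) one index in $\{1,\dots,k\}$ and the other in $\{k+1,\dots,n\}$; (c) both indices in $\{k+1,\dots,n\}$; and (d) pairs involving $v_{n+1}$. Block (a) contributes identically to its counterpart in $firr_t(J^*_n(x))$. Block (d) expands directly to $\sum_{i=1}^{n}|f_{n-k}-f_{d^*(v_i)}|$.

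For block (b), each summand changes from $|f_{d(v_i)}-f_{d(v_j)}|$ to $|f_{d(v_i)+1}-f_{d(v_j)}|$. Since the Fibonacci sequence is weakly monotone on non-negative indices, this difference equals $+(f_{d(v_i)+1}-f_{d(v_i)})$ whenever $d^*(v_i)>d(v_j)$, and $-(f_{d(v_i)+1}-f_{d(v_i)})$ whenever $d^*(v_i)\le d(v_j)$. Summing over $j\le k$ for each fixed $i\in\{k+1,\dots,n\}$ partitions $\{1,\dots,k\}$ precisely into the classes counted by $\ell_{(1,i)}$ and $\ell_{(2,i)}$, producing the two middle terms of the claimed formula. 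For block (c), both endpoints' weights shift, so the per-pair adjustment is $|f_{d(v_i)+1}-f_{d(v_j)+1}|-|f_{d(v_i)}-f_{d(v_j)}|$; collecting these over all $k<i<j\le n$ yields the final double sum. Adding the contributions of (a), (b), (c), and (d) gives the stated recursion.

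The main obstacle is the careful sign analysis in block (b), especially where the Fibonacci sequence fails to be strictly monotone (for example $f_1=f_2=1$): there $f_{d(v_i)+1}-f_{d(v_i)}$ can vanish, so both the $\ell_{(1,i)}$ and $\ell_{(2,i)}$ contributions collapse to zero, which is consistent with the fact that the weight of $v_i$ is actually unchanged in that degenerate case. The second subtlety is checking that the strict/non-strict split $\{d^*(v_i)>d(v_j)\}$ versus $\{d^*(v_i)\le d(v_j)\}$ truly partitions $\{1,\dots,k\}$, so that every pair in block (b) is counted exactly once and the bookkeeping against the Fibonacci analogue of the $\sum i - \sum i$ term of Theorem~2.3 is exact.
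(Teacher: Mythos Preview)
Your proposal is correct and follows essentially the same decomposition as the paper: identify the prime Jaconian vertex $v_k$, note that $d^*(v_i)=d(v_i)$ for $i\le k$, $d^*(v_i)=d(v_i)+1$ for $k<i\le n$, and $d^*(v_{n+1})=n-k$, then split the pairs accordingly. Your block-(b) sign analysis is in fact more careful than the paper's own proof, which collapses that contribution to $k\sum_{i=k+1}^{n}|f_{d(v_i)+1}-f_{d(v_i)}|$ rather than the $\ell_{(1,i)}/\ell_{(2,i)}$ split appearing in the statement; the only small point you should make explicit is that your signed block-(c) adjustment $|f_{d(v_i)+1}-f_{d(v_j)+1}|-|f_{d(v_i)}-f_{d(v_j)}|$ is always non-negative (it equals $f_{d(v_j)-1}-f_{d(v_i)-1}\ge 0$ when $d(v_i)\le d(v_j)$), so it matches the outer absolute value in the stated formula.
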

\begin{proof}
Let $J^*_n(x)$ have the prime Jaconian vertex, $v_k,$  hence $d(v_k) = \Delta (J^*_n(x))$ as defined in $[3].$ It is also true that $d(v_k) = \Delta (J^*_n(x)) = d^*(v_k).$  By adding vertex $v_{n+1}$ to construct $J_{n+1}(x),$ the vertex $v_{n+1}$ obtains degree, $d^*(v_{n+1}) = n - k.$ Each vertex $v_{k+j}, j= 1,2, \dots, n-k$ obtains an additional edge, $v_{k+j}v_{n+1}$ as well. So clearly $d^*(v_{k+j}) = d(v_{k+j}) + 1$ for $j = 1,2, \dots, n-k.$ We also have that $d^*(v_i) = d(v_i), i = 1, 2, \dots, k.$ It implies that to calculate $firr_t(J^*_{n+1}(1)),$ the term $\sum \limits_{i=1}^{n}|f_{n-k} - f_{d^*(v_i)}|$ must be added.\\ \\
For each vertex $v_i,(k+1) \leq i \leq n$ the \emph{fibonacci weight} increases by $|f_{d^*(v_i)} -f_{d(v_i)}| = |f_{d(v_i) + 1} - f_{d(v_i)}|.$ It implies that to calculate $firr_t(J^*_{n+1}(x)),$ the term $k(\sum \limits_{i= k+1}^{n}|f_{d(v_i)+1} - f_{d(v_i)}|)$ must be added as well.\\ \\ 
Finally, the increase in the $f_t$-irregularity between $J^*_n(x)$ and $J^*_{n+1}(x)$ from amongst vertices, $v_{k+1}, v_{k+2}, \dots, v_n$ is given by the term, $\sum \limits_{j=k+1}^{n-1} \sum \limits_{j=i+1}^{n}||f_{d(v_i)} - f_{d(v_j)}| - |f_{d^*(v_i)} - f_{d^*(v_j)}||.$\\ \\
Hence, the result:\\ \\
$firr_t(J^*_{n+1}(x)) = firr_t(J^*_n(x)) + \sum \limits_{i=1}^{n}|f_{n-k} - f_{d^*(v_i)}| + k(\sum \limits_{i=k+1}^{n}|f_{d(v_i)+1} - f_{d(v_i)}|) + \sum \limits_{j=k+1}^{n-1} \sum \limits_{j=i+1}^{n}||f_{d(v_i)} - f_{d(v_j)}| - |f_{d(v_i)+1} - f_{d(v_j)+1}||,$ follows.
\end{proof}
\subsection{$firr_t$ Resulting from Edge-joint between Jaco Graphs}
Abdo and Dimitrov $[2]$ observed that $irr_t(G \cup H) \geq irr(t(G) + irr_t(H)).$ We present a result for $irr_t(J^*_n(x) \cup J^*_m(x))$ followed by a corollary in respect of $firr_t.$
\begin{theorem}(Lumin's 2nd Theorem)
For the Jaco Graphs $J^*_n(x)$ and $J^*_m(x),$ we have that:
\begin{equation*} 
irr_t(J^*_n(x) \cup J^*_m(x))
\begin{cases}
\leq  2(irr_t(J^*_n(x) + irr_t(J^*_m(x))) + \sum\limits_{i=\ell+1}^{n}\sum\limits_{j= n+(\ell+1)}^{m} |d(v_i)- d(v_j)|, &\text {if $n > m,$}\\ \\
= 4(irr_t(J^*_n(x))), &\text {if $n = m$,}
\end{cases}
\end{equation*} \\
with $\ell = \Delta J^*_m(x).$
\end{theorem}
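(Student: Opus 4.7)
The starting point is a general identity for the total irregularity of a disjoint union. Since disjoint union preserves every vertex degree, expanding the definition yields
\[
irr_t(G \cup H) = irr_t(G) + irr_t(H) + C(G,H),
\]
where $C(G,H) := \sum_{u \in V(G)} \sum_{w \in V(H)} |d(u) - d(w)|$ collects the contributions from pairs straddling the two components. The whole proof therefore reduces to analysing this cross-term for the two Jaco graphs, and the plan is to attack each of the two cases of the theorem by directly controlling $C$.

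For the equality case $n = m$, the two copies of $J^*_n(x)$ have identical degree sequences. Indexing the second copy as $v'_i$ with $d(v'_i) = d(v_i)$, the cross term becomes $\sum_{i=1}^{n}\sum_{j=1}^{n} |d(v_i) - d(v_j)|$, which by the symmetric form of the definition recalled in Section~1 equals $2\,irr_t(J^*_n(x))$. Adding the two intra-graph contributions gives the claimed $4\,irr_t(J^*_n(x))$ with equality.

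For the case $n > m$, the plan is to exploit the structural feature visible in Table~1 that the degree sequences of $J^*_n(x)$ and $J^*_m(x)$ coincide on the initial segment $\{v_1,\dots,v_\ell\}$, where $\ell = \Delta(J^*_m(x))$ and $d(v_i) = i$ in both graphs. After relabelling the vertices of the second component as $v_{n+1},\dots,v_{n+m}$, I would split $C$ into four blocks according to whether each of the two indices lies in the shared initial segment or past it. The three blocks that touch the shared segment can be bounded by applying the triangle inequality $|d(u) - d(w)| \leq |d(u) - d(v_\ell)| + |d(v_\ell) - d(w)|$ anchored at the common pivot $v_\ell$ (the prime Jaconian vertex of $J^*_m(x)$); reassembling these terms produces contributions at most $irr_t(J^*_n(x))$ and $irr_t(J^*_m(x))$ each, which together account for the factor of two appearing in $2(irr_t(J^*_n(x)) + irr_t(J^*_m(x)))$. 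The fourth block, over $i > \ell$ in the first graph and $j > n + \ell$ in the relabelled second graph, is left unprocessed and is exactly the correction double sum on the right-hand side.

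The main obstacle will be the bookkeeping in the last step: one must check that the three triangle-inequality estimates, once summed, actually fit under the single factor of two and are not being double-counted against each other, and that the choice of pivot $v_\ell$ is legitimate simultaneously in both graphs (which rests on $\Delta(J^*_m(x)) \leq \Delta(J^*_n(x))$, an easy monotonicity of the Jaco construction). A smaller but real hazard is parsing the stated upper limit of the correction double sum, which in context plainly intends $n+m$ (the largest relabelled index of the second copy) rather than the literal $m$; the strategy above naturally produces the former.
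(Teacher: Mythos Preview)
Your treatment of the case $n=m$ is correct and is exactly what the paper does: the cross term $C(G,H)$ equals $\sum_{i,j}|d(v_i)-d(v_j)| = 2\,irr_t(J^*_n(x))$, and adding the two intra-graph contributions yields $4\,irr_t(J^*_n(x))$.

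For $n>m$ your overall architecture also matches the paper: decompose $irr_t$ of the union into the two intra-graph sums plus the cross sum, relabel the second copy as $v_{n+1},\dots,v_{n+m}$, and split the cross sum into four blocks according to whether each index exceeds $\ell=\Delta(J^*_m(x))$. The fourth block is indeed left untouched as the correction term (and your reading of the upper limit as $n+m$ is the intended one).

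The gap is in how you propose to handle the other three blocks. You reach for the triangle inequality anchored at the single pivot $v_\ell$, and you already flag the worry that the resulting pieces may not reassemble under the factor $2(irr_t(J^*_n)+irr_t(J^*_m))$. That worry is justified: summing $|d(v_i)-d(v_\ell)|+|d(v_\ell)-d(v_{n+k})|$ over a full block multiplies each pivot term by the block's side length, which overshoots the target. The paper does \emph{not} use a triangle inequality at all. Instead it uses the very structural fact you already isolated --- that $d_{J^*_n}(v_t)=t=d_{J^*_m}(v_t)$ for every $t\le \ell$ --- to rewrite each cross term as an \emph{exact} term of one of the two intra-graph sums. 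Concretely: for $k\le \ell$ one has $d(v_{n+k})=d_{J^*_m}(v_k)=k=d_{J^*_n}(v_k)$, so $|d(v_i)-d(v_{n+k})|=|d(v_i)-d(v_k)|$, a genuine summand of $irr_t(J^*_n(x))$; symmetrically, for $i\le \ell$ one has $d(v_i)=d(v_{n+i})$, so $|d(v_i)-d(v_{n+k})|=|d(v_{n+i})-d(v_{n+k})|$, a summand of $irr_t(J^*_m(x))$. Splitting the $\ell\times\ell$ block along its zero diagonal and pairing each triangle with the adjacent rectangle, the converted terms embed injectively into (but do not exhaust) the terms of $irr_t(J^*_n(x))$ and $irr_t(J^*_m(x))$ respectively, which is where the inequality sign comes from. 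Replace your triangle-inequality step with this direct term identification and the proof goes through.
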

\begin{proof}
Case 1: Consider the Jaco Graphs $J^*_n(x)$ and $J^*_m(x), n > m.$  Label the vertices $\underbrace{v_1, v_2, \dots, v_n,}_{vertices-in-J^*_n(x)}\\ \underbrace {v_{n+1}, v_{n+2}, \dots, v_{n+m}}_{vertices-in-J^*_m(x)}.$ Let us expand the definition of $irr_t(J^*_n(x) \cup J^*_m(x))$ into three parts.\\ \\
Part(i): In respect of $J^*_n(x)$ itself, we have the partial sum,\\ \\
$|d(v_1) - d(v_2)| + |d(v_1) - d(v_3)| + \dots + |d(v_1) - d(v_{n-2})| + |d(v_1) - d(v_{n-1})| + |d(v_1) - d(v_n)| +\\
|d(v_2) - d(v_3)| + |d(v_2) - d(v_4)|+ \dots + |d(v_2) - d(v_{n-1})|+ |d(v_1) - d(v_n)| +\\
|d(v_3) - d(v_4)| + |d(v_3) - d(v_5)| + \dots + |d(v_3) - d(v_n)| + \\
.\\
.\\
.\\
|d(v_{n-2}) - d(v_{n-1})| + |d(v_{n-2}) - d(v_n)| +\\
|d(v_{n-1}) - d(v_n)|\\
=irr_t(J^*_n(x)).$\\ \\
Part (ii): In respect of $J^*_m(x)$ itself, we have the partial sum,\\ \\
$|d(v_{n+1}) -  d(v_{n+2})| + \dots + |d(v_{n+1}) - d(v_{(n+m)-2})| + |d(v_{n+1}) - d(v_{(n+m)-1})| + |d(v_{n+1}) - d(v_{n+m})| +\\
|d(v_{n+2}) -  d(v_{n+3})| + \dots + |d(v_{n+2}) - d(v_{(n+m)-1})|+ |d(v_{n+2}) - d(v_{n+m})| +\\
|d(v_{n+3}) -  d(v_{n+4})| + \dots + |d(v_{n+3}) - d(v_{n+m})| + \\
.\\
.\\
.\\
|d(v_{(n+m)-2}) - d(v_{(n+m)-1})| + |d(v_{(n+m)-2}) - d(v_{n+m})| +\\
|d(v_{(n+m)-1}) - d(v_{n+m})|\\
=irr_t(J^*_m(x)).$\\ \\
Part (iii): In respect of $J^*_n(x)$ towards $J^*_m(x)$ we have the partial sum,\\ \\
$|d(v_1) - d(v_{n+1})| + |d(v_1) - d(v_{n+2})| + \dots + |d(v_1) - d(v_{n+m})| +\\
|d(v_2) - d(v_{n+1})| + |d(v_2) - d(v_{n+2})| + \dots + |d(v_2) - d(v_{n+m})| +\\
.\\
.\\
.\\
|d(v_n) - d(v_{n+1})| + |d(v_n) - d(v_{n+2})| + \dots + |d(v_n) - d(v_{n+m})| =\\ \\
0 + |d(v_1) - d(v_{n+2})|+ \dots + |d(v_1) - d(v_{n+m})| +\\
|d(v_2) - d(v_{n+1})| + 0 + |d(v_2) - d(v_{n+3})|+ \dots + |d(v_2) - d(v_{n+m})|+\\
|d(v_3) - d(v_{n+1})| + |d(v_3) - d(v_{n+2})| + 0 + |d(v_3) - d(v_{n+4})| + \dots + |d(v_3) - d(v_{n+m})|+\\
.\\
.\\
.\\
\underbrace{|d(v_{\ell}) - d(v_{n+1})| + \dots + \underbrace{0}_{\ell^{th}-term} + |d(v_{\ell}) - d(v_{n+(\ell + 1)})| + \dots + |d(v_{\ell}) - d(v_{n+m})|}_{\ell^{th}-row} + \\ \\
|d(v_{\ell+1}) - d(v_{n+1})| + |d(v_{\ell+1}) - d(v_{n+2})| + \dots + |d(v_{\ell +1}) - d(v_{n+m})| +\\
.\\
.\\
.\\
|d(v_n) - d(v_{n+1})| + |d(v_n) - d(v_{n+2})| + \dots + |d(v_n) - d(v_{n+m})|.$\\ \\
By observing that a term $|d(v_i) - d(v_j)|,1 \leq i \leq \ell$ and $i \leq j \leq (n+ i) -1$ can be converted to $|d(v_i) - d(v_j)| = |d(v_{j-n}) - d(v_i)|,$ with $|d(v_{n-j}) - d(v_i)|$ a term of $\sum \limits_{i=1}^{n-1} \sum \limits_{j=i+1}^{n}|d(v_i) - d(v_j)|_{v_i, v_j \in J^*_n(x)}.$ It is also noted that a term $|d(v_i) - d(v_j)|,\ell+1 \leq i \leq n$ and $n+1 \leq j \leq n+ \ell$ can be converted to $|d(v_i) - d(v_j)| = |d(v_{j-n}) - d(v_i)|,$ with $|d(v_{n-j}) - d(v_i)|$ a term of $\sum \limits_{i=1}^{n-1} \sum \limits_{j=i+1}^{n}|d(v_i) - d(v_j)|_{v_i, v_j \in J^*_n(x)}.$\\ \\
Similarly, by observing that a term $|d(v_i) - d(v_j)|, 1 \leq i \leq \ell-1$ and $n+ 2 \leq j \leq n+ \ell$ can be converted to $|d(v_i) - d(v_j)| = |d(v_{n+i}) - d(v_j)|,$ with $|d(v_{n+i}) - d(v_j)|$ a term of $\sum \limits_{i=1}^{n-1} \sum \limits_{j=i+1}^{n}|d(v_i) - d(v_j)|_{v_i, v_j \in J^*_m(x)}.$ It is also noted that a term $|d(v_i) - d(v_j)|,1 \leq i \leq \ell$ and $(n+ \ell)+1 \leq j \leq n+ m$ can be converted to $|d(v_i) - d(v_j)| = |d(v_{j-n}) - d(v_i)|,$ with $|d(v_{n+i}) - d(v_j)|$ a term of $\sum \limits_{i=1}^{n-1} \sum \limits_{j=i+1}^{n}|d(v_i) - d(v_j)|_{v_i, v_j \in J^*_m(x)}.$ \\ \\
Finally it is observed that the terms $|d(v_i) - d(v_j)| \geq 0, \ell+1 \leq i \leq n+m$ and $(n+\ell) + 1 \leq j \leq n+m$ cannot be converted.\\ \\
Hence, the result:\\ \\
$irr_t(J^*_n(x) \cup J^*_m(x)) \leq 2(irr_t(J^*_n(x)) + irr_t(J^*_m(x))) + \sum\limits_{i=\ell+1}^{n}\sum\limits_{j= n+(\ell+1)}^{m} |d(v_i)- d(v_j)|,$ follows.\\ \\
Case 2: Parts (i) and (ii) follow similarly to that of Case 1.\\ \\
Part (iii): In respect of $J^*_n(x)$ towards $J^*_n(x)$ we have the partial sum,\\ \\
$|d(v_1) - d(v_{n+1})| + |d(v_1) - d(v_{n+2})| + \dots + |d(v_1) - d(v_{2n})| +\\
|d(v_2) - d(v_{n+1})| + |d(v_2) - d(v_{n+2})| + \dots + |d(v_2) - d(v_{2n})| +\\
.\\
.\\
.\\
|d(v_n) - d(v_{n+1})| + |d(v_n) - d(v_{n+2})| + \dots + |d(v_n) - d(v_{2n})| =\\ \\
0 + |d(v_1) - d(v_{n+2})|+ \dots + |d(v_1) - d(v_{2n})| +\\
|d(v_2) - d(v_{n+1})| + 0 + |d(v_2) - d(v_{n+3})|+ \dots + |d(v_2) - d(v_{2n})|+\\
|d(v_3) - d(v_{n+1})| + |d(v_3) - d(v_{n+2})| + 0 + |d(v_3) - d(v_{n+4})| + \dots + |d(v_3) - d(v_{2n})|+\\
.\\
.\\
.\\
|d(v_n) - d(v_{n+1})| + |d(v_n) - d(v_{n+2})| + \dots + |d(v_n) - d(v_{2n-1})| + \underbrace{0}_{n^{th}-term}.$\\ \\
So similary to the \emph{term conversion rules} of Part (iii) above in Case 1 we calculate exactly another $irr_t(J^*_n(x))$ on the \emph{left under of 0-entries} and another $irr_t(J^*_n(x))$ on the \emph{right upper of 0-entries}. So, Parts (i), (ii) and (iii) added together gives the result:\\ \\
$irr_t(J^*_n(x) \cup J^*_n(x)) = 4(irr_t(J^*_n(x))).$ 
\end{proof}
\begin{corollary}
For the Jaco Graphs $J^*_n(x)$ and $J^*_m(x),$ we have that:
\begin{equation*} 
firr_t(J^*_n(x) \cup J^*_m(x))
\begin{cases}
\leq  2(firr_t(J^*_n(x) + firr_t(J^*_m(x))) + \sum\limits_{i=\ell+1}^{n}\sum\limits_{j= n+(\ell+1)}^{m} |f_{d(v_i)}- f_{d(v_j)}| , &\text {if $n > m$,}\\ \\ 
= 4(firr_t(J^*_n(x))), &\text {if $n = m$.}
\end{cases}
\end{equation*} \\
\end{corollary}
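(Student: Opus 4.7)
The plan is to follow the three-part decomposition and term-conversion strategy of Theorem 3.2 (Lumin's 2nd Theorem) almost verbatim, applying the single substitution $d(v_i) \mapsto f_{d(v_i)}$ uniformly to every absolute-value summand. Because the definitions of $irr_t$ and $firr_t$ differ only in whether one writes $|d(v_i) - d(v_j)|$ or $|f_{d(v_i)} - f_{d(v_j)}|$, and the proof of Theorem 3.2 is pure combinatorial bookkeeping over which index pairs contribute to which subsum, no new analytic ingredient is required; the corollary is genuinely a corollary, not an independent theorem.

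First I would label the vertices of $J^*_n(x) \cup J^*_m(x)$ as $v_1, \dots, v_n$ (from $J^*_n(x)$) followed by $v_{n+1}, \dots, v_{n+m}$ (from $J^*_m(x)$), then expand $firr_t(J^*_n(x) \cup J^*_m(x))$ into three partial sums exactly as in Theorem 3.2: (i) pairs inside $J^*_n(x)$, (ii) pairs inside $J^*_m(x)$, and (iii) cross pairs. Parts (i) and (ii) equal $firr_t(J^*_n(x))$ and $firr_t(J^*_m(x))$ respectively, straight from the definition of $firr_t$ via the vertex-label map $g : V(G) \to \Bbb F$. For Part (iii), I would apply exactly the same re-indexing rules used in Theorem 3.2: cross terms lying in the upper and lower sub-blocks convert into terms already appearing in $firr_t(J^*_n(x))$ or $firr_t(J^*_m(x))$, while the residual block $\ell+1 \leq i \leq n$, $n+\ell+1 \leq j \leq n+m$ (with $\ell = \Delta(J^*_m(x))$) cannot be converted and forms the extra double sum $\sum_{i=\ell+1}^{n} \sum_{j=n+(\ell+1)}^{m} |f_{d(v_i)} - f_{d(v_j)}|$ displayed in the statement. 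This yields the $\leq$ bound in the case $n > m$.

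For the $n = m$ case I would mirror Case 2 of Theorem 3.2: the cross-block is then a full $n \times n$ array of entries $|f_{d(v_i)} - f_{d(v_j)}|$ with zeros on the diagonal, and the conversion rules split it cleanly into two symmetric copies of $firr_t(J^*_n(x))$ (one from above and one from below the diagonal), yielding $4\, firr_t(J^*_n(x))$ when combined with Parts (i) and (ii).

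The main obstacle is not analytical but verificational: one must confirm that every step of the Theorem 3.2 argument depends only on the formal shape $|a_i - a_j|$ of the summands and not on the specific interpretation $a_i = d(v_i)$. This is immediate because setting $a_i = f_{d(v_i)}$ is nothing more than composing with the Fibonacci-weight labeling, and every conversion in Theorem 3.2 relabels summation indices rather than exploiting any numerical property of degrees; the only cosmetic check is that diagonal entries $|f_{d(v_i)} - f_{d(v_i)}| = 0$ still vanish, which is obvious. A sensible writing strategy, therefore, is to state the three-part expansion once, point to the corresponding paragraphs of the Theorem 3.2 proof for the conversion bookkeeping, and collect the substituted sums at the end.
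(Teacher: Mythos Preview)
Your proposal is correct and matches the paper's approach exactly: the paper's entire proof reads ``Similar to the proof of Theorem 2.2,'' and you have simply spelled out what that one-liner means, namely rerunning the three-part decomposition and index-conversion bookkeeping of Lumin's 2nd Theorem with the uniform substitution $d(v_i)\mapsto f_{d(v_i)}$.
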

\begin{proof}
\noindent Similar to the proof of Theorem 2.2.
\end{proof}
\begin{definition}
The edge-joint of two simple undirected graphs $G$ and $H$ is the graph obtained by linking the edge $vu,$ $v \in V(G), u \in V(H)$ and denoted, $G\rightsquigarrow_{vu}H.$
\end{definition}
\begin{lemma}
Consider the underlying Jaco graphs $J^*_n(x)$ and $J^*_m(x)$ on the vertice $v_1, v_2, v_3, \dots, v_n$ and $u_1, u_2, u_3, \dots, u_m$, respectively, then $firr_t(J^*_n(x) \cup J^*_m(x)) = firr_t(J^*_n(x) \rightsquigarrow_{v_1u_1} J^*_m(x)).$
\end{lemma}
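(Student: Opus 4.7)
The plan is to exploit the Fibonacci coincidence $f_1=f_2=1$, which makes the single added edge of an edge-joint at $v_1u_1$ invisible to the total Fibonaccian irregularity. Assuming $n,m\geq 2$ so that $v_1$ and $u_1$ are not isolated (the degenerate case $n=1$ or $m=1$ genuinely fails, since there the joined vertex passes from weight $f_0=0$ to $f_1=1$), my goal is to verify that the multiset of Fibonacci weights on the shared vertex set is preserved under passage from $J^*_n(x)\cup J^*_m(x)$ to the edge-joint. Equality of the defining double sum for $firr_t$ then follows immediately.

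First I would pin down the degrees at the two joining endpoints. By Definition 2.1 together with the first row of Table 1 (equivalently, directly from the Fisher algorithm), in every $J^*_n(x)$ with $n\geq 2$ the vertex $v_1$ satisfies $d^-(v_1)=0$ and $d^+(v_1)=1$, so $d(v_1)=1$. Identically $d(u_1)=1$ in $J^*_m(x)$ for $m\geq 2$.

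Next I would track the effect of the edge-joint on degrees. Since $J^*_n(x)\rightsquigarrow_{v_1u_1}J^*_m(x)$ is obtained from $J^*_n(x)\cup J^*_m(x)$ by adding the single edge $v_1u_1$, only the degrees of $v_1$ and $u_1$ are altered, each incrementing from $1$ to $2$; every other vertex retains its degree and therefore its Fibonacci weight. The weight at $v_1$ shifts from $f_1$ to $f_2$, and similarly at $u_1$, but since $f_1=f_2=1$ these two labels are in fact unchanged. Hence the weight vector on the full $(n+m)$-vertex set is identical in the disjoint union and in the edge-joint, so the sum $\sum_{i<j}|f_{d(v_i)}-f_{d(v_j)}|$ yields the same number for both graphs, which is the claim.

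The main (very mild) obstacle is that there is essentially no obstacle: the whole content of the lemma is the arithmetic identity $f_1=f_2$, and the only point requiring real care is excluding or treating separately the $1$-empty boundary case, where the incident endpoint would move from weight $f_0=0$ to $f_1=1$ and the asserted equality would fail.
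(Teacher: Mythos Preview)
Your argument is correct and is essentially the paper's own proof: both observe that $d(v_1)=d(u_1)=1$ before the join and $2$ after, and that $f_1=f_2=1$ leaves the Fibonacci weight multiset unchanged, whence the double sum defining $firr_t$ is identical. Your explicit treatment of the degenerate case $n=1$ or $m=1$ (where the endpoint has degree $0$ and the equality genuinely fails) is an improvement over the paper, which tacitly assumes $n,m\geq 2$ without comment.
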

\begin{proof}
Since $d(v_1) =1 $ and $d(u_1) = 1$ in $J^*_n(x)$ and $J^*_m(x)$ respectively, the \emph{fibonacci weight} of $v_1,$ $u_1$ equals $f_1 = 1,$ respectively. In the graph $J^*_n(x)\rightsquigarrow_{v_1u_1}J^*_m(x)$, we have that $d(v_1) = 2, d(u_1) = 2$ with both fibonacci weights remaining 1, so the result follows.
\end{proof}
\begin{theorem}
Consider the underlying graphs $J^*_n(x),$ $n \geq 3$ and $J^*_m(x),$ $m\geq 1$ on the vertices $v_1, v_2, v_3, \dots, v_n$ and $u_1, u_2, u_3, \dots, u_m$, respectively. Without loss of generality choose any vertex $v_i, i\neq 1$ from $V(J^*_n(x)).$ Let $V_1 = \{v_x| f_{d(v_x)} \leq f_{d(v_i)}\}, |V_1| = a;$ $V_2 = \{v_y| f_{d(v_y)} > f_{d(v_i)}\}, |V_2| = b;$ $V_3 = \{u_x|f_{d(u_x)} \leq f_{d(v_i)}\}, |V_3| = a^*$ and $V_4 = \{u_y| f_{d(u_y)} > f_{d(v_i)}\}, |V_4| = b^*.$   For the simple connected graph $G' = J^*_n(x)\rightsquigarrow_{v_iu_1}J^*_m(x)$ we have that:\\ \\
$firr_t(G') = firr_t(J^*_n(x)) + firr_t(J^*_m(x)) + \sum \limits_{j=1}^{n} \sum\limits_{k=1}^{m}|f_{d(v_j)}- f_{d(u_k)}|_{v_j \in V(J^*_n(x)), u_k \in V(J^*_m(x))} + \sum\limits_{j=1}^a|f_{d(v_i)} -f_{d(v_j)}|_{v_j \in V_1} - \sum\limits_{j=1}^b|f_{d(v_i)} -f_{d(v_j)}|_{v_j \in V_2} + \sum\limits_{j=1}^{a^*}|f_{d(v_i)} -f_{d(v_j)}|_{v_j \in V_3}  - \sum\limits_{j=1}^{b^*}|f_{d(v_i)} -f_{d(v_j)}|_{v_j \in V_4}.$ 
\end{theorem}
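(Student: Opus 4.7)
My plan is to reduce to the case of the disjoint union $J^*_n(x)\cup J^*_m(x)$, for which the $f_t$-irregularity decomposes transparently, and then to quantify the corrections introduced by the single extra edge $v_iu_1$ that distinguishes $G'$ from that disjoint union. Adding this edge increments exactly two degrees, $d(v_i)$ and $d(u_1)$. Because $d(u_1)=1$ in $J^*_m(x)$ and $f_1=f_2=1$, the Fibonacci weight of $u_1$ is unchanged in $G'$ (this is precisely the observation used in Lemma 3.4); the only weight actually modified is that of $v_i$, which passes from $f_{d(v_i)}$ to $f_{d(v_i)+1}$.

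I would next record the direct decomposition
\[
firr_t(J^*_n(x)\cup J^*_m(x)) \;=\; firr_t(J^*_n(x))+firr_t(J^*_m(x))+\sum_{j=1}^{n}\sum_{k=1}^{m}|f_{d(v_j)}-f_{d(u_k)}|,
\]
which is immediate from the definition by separating pairs internal to $J^*_n(x)$, internal to $J^*_m(x)$, and across the two vertex sets. Splitting the sum defining $firr_t(G')$ into pairs that do not involve $v_i$ and pairs that do, the first bundle reproduces the displayed disjoint-union quantity verbatim, supplying the first three summands of the claim. The pairs involving $v_i$ differ only in that $f_{d(v_i)+1}$ now appears in place of $f_{d(v_i)}$, so the remainder of the proof boils down to evaluating, for each other vertex $w$, the signed change $|f_{d(v_i)+1}-f_{d(w)}|-|f_{d(v_i)}-f_{d(w)}|$ and summing over $w$.

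To resolve these absolute values I would invoke the strict monotonicity of the Fibonacci sequence from index $2$ onwards: every $w$ with $f_{d(w)}\leq f_{d(v_i)}$ automatically satisfies $f_{d(w)}<f_{d(v_i)+1}$, and every $w$ with $f_{d(w)}>f_{d(v_i)}$ satisfies $f_{d(w)}\geq f_{d(v_i)+1}$. The four sets $V_1,V_2,V_3,V_4$ are tailored exactly to this dichotomy, classifying each other vertex both by which side of $f_{d(v_i)}$ its weight lies on and by which of the two Jaco graphs hosts it. Summing the signed change class by class, with a plus sign on $V_1$ and $V_3$ (where the absolute value grows when $f_{d(v_i)}$ is raised to $f_{d(v_i)+1}$) and a minus sign on $V_2$ and $V_4$ (where it shrinks), recovers the four correction sums appearing in the statement.

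The main obstacle is purely bookkeeping rather than any subtle estimate. One must remove $v_i$ itself from $V_1$ before summing; one must keep the pairs involving $v_i$ that already sit inside $firr_t(J^*_n(x))$ (so that the $V_1,V_2$ adjustments attach to that piece) distinct from the pairs involving $v_i$ that sit inside the cross-sum over $V(J^*_n(x))\times V(J^*_m(x))$ (where the $V_3,V_4$ adjustments apply); and one must verify that the strict inequality defining $V_2$ and $V_4$ actually enforces $f_{d(w)}\geq f_{d(v_i)+1}$, the only place the Fibonacci monotonicity is used non-trivially. Once each class is charged with the correct sign dictated by this case-split, the stated identity assembles itself.
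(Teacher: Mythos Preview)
Your proposal is correct and follows essentially the same route as the paper: both arguments start from the disjoint-union decomposition $firr_t(J^*_n(x))+firr_t(J^*_m(x))+\sum_{j,k}|f_{d(v_j)}-f_{d(u_k)}|$, observe (as in Lemma 3.4) that the edge $v_iu_1$ leaves the Fibonacci weight of $u_1$ unchanged because $f_1=f_2=1$, and then account for the shift $f_{d(v_i)}\to f_{d(v_i)+1}$ by splitting the remaining vertices into the four classes $V_1,V_2,V_3,V_4$ according to the sign of $f_{d(w)}-f_{d(v_i)}$. Your write-up is somewhat more explicit about the bookkeeping (excluding $v_i$ from $V_1$, separating the $V_1,V_2$ corrections attached to $firr_t(J^*_n(x))$ from the $V_3,V_4$ corrections attached to the cross-sum, and invoking Fibonacci monotonicity to resolve the absolute values), but the strategy is identical to the paper's.
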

\begin{proof}
Clearly for $G = J^*_n(x) \cup J^*_m(x)$ we have that $firr_t(G) = firr_t(J^*_n(x)) + firr_t(J^*_m(x)) + \sum \limits_{j=1}^{n} \sum\limits_{k=1}^{m}|f_{d(v_j)}- f_{d(u_k)}|_{v_j \in V(J^*_n(x)), u_k \in V(J^*_m(x))}.$ \\ \\
Since $f_1 = f_2 =1$, increasing $d(u_1)$ by 1 has no effect on the value of $firr_t(J^*_n(x)\rightsquigarrow_{v_iu_1}J^*_m(x)).$  By increasing $d(v_i)$ by 1 we increase the value of $firr_t(J^*_n(x)\rightsquigarrow_{v_iu_1}J^*_m(x))$  by exactly $|f_{d(v_i)} - f_{d(v_j)}|_{1 \leq j \leq a}$ in respect of $J^*_n(x).$ So the total partial increase is given by sum $(\sum \limits_{j=1}^{a}|f_{d(v_i)} - f_{d(v_j)}|)_{v_j \in V_1}.$ It also reduces the value of $firr_t(J^*_n(x)\rightsquigarrow_{v_iu_1}J^*_m(x))$  by exactly $|f_{d(v_i)} - f_{d(v_j)}|_{1 \leq j \leq b}.$ So the total partial decrease is given by $(\sum \limits_{j=1}^{b}|f_{d(v_i)} - f_{d(v_j)}|)_{v_j \in V_2}.$\\ \\
In respect of $J^*_m(x)$ it also increases the the value of $firr_t(J^*_n(x)\rightsquigarrow_{v_iu_1}J^*_m(x))$  by exactly $|f_{d(v_i)} - f_{d(v_j)}|_{1 \leq j \leq a^*}.$ So the total partial increase is given by $(\sum \limits_{j=1}^{a^*}|f_{d(v_i) - d(v_j)}|)_{v_j \in V_3}.$  It also reduces the value of $firr_t(J^*_n(x)\rightsquigarrow_{v_iu_1}J^*_m(x))$  by exactly $|f_{d(v_i)} - f_{d(v_j)}|_{1 \leq j \leq b^*}.$ So the total partial decrease is given by $(\sum \limits_{j=1}^{b^*}|f_{d(v_i)} - f_{d(v_j)}|)_{v_j \in V_4}.$\\ \\
Hence, the result:\\ \\
 $firr_t(G') = firr_t(J^*_n(x)) + firr_t(J^*_m(x)) + \sum \limits_{j=1}^{n} \sum\limits_{k=1}^{m}|f_{d(v_j)}- f_{d(u_k)}|_{v_j \in V(J^*_n(x)), u_k \in V(J^*_m(x))} + \sum\limits_{j=1}^a|f_{d(v_i)} -f_{d(v_j)}|_{v_j \in V_1} - \sum\limits_{j=1}^b|f_{d(v_i)} -f_{d(v_j)}|_{v_j \in V_2} + \sum\limits_{j=1}^{a^*}|f_{d(v_i)} -f_{d(v_j)}|_{v_j \in V_3}  - \sum\limits_{j=1}^{b^*}|f_{d(v_i)} -f_{d(v_j)}|_{v_j \in V_4},$ follows. 
\end{proof}
\section{Conclusion}
The allocation of Fibonacci weights to the vertices of graphs as a function of vertex-degree is a variation of graph labeling. Deriving a useful edge labeling from the primary vertex labeling is still open. From this study the following can be pursued:\\\\
(i) Describe a well-defined algorithm that determines the result of  Theorem 1.1,\\ 
(ii) Describe a well-defined algorithm that determines the result of Theorem 2.1.\\\\
$f$-Irregularity can be studied for a number of classes of graphs such as paths, cycles, trees and for graph operations. Furthermore, we propose the following vertex labeling study. Define the $\pm$\emph{Fibonacci weight}, $f_i^\pm$ of a vertex $v_j$ to be $-f_{d(v_j)},$ if $d(v_j)$ is odd and, $f_{d(v_j)},$ if $d(v_j)$ is even. Determine $firr_t^\pm(G) = \sum \limits_{i=1}^{n-1} \sum \limits_{j=i+1}^{n}|f_i^\pm - f_j^\pm|$ in general or for some special classes of graphs. For example, $firr_t^\pm(P_n) = 4(n-2)$ and $firr_t^\pm(C_n) = firr_t(C_n) = irr_t(C_n) = 0.$ The latter result holds for all regular graphs.\\\\
Because total irregularity presents a sum of differences between all pairs of natural numbers in a subset $X\subset \Bbb N$ for $X$ having an even number of odd numbers we see that $f_t$-irregularity is the specialisation thereof by mapping on Fibonnaci numbers. This allows for studies where mapping on complex numbers or other number classes or families of \emph{number abstractions} which have the notions of \emph{even abstractions} and \emph{odd abstractions} imbedded, to be considered. Clearly such number abstraction could be a graph. For example let a path on $n$ vertices be $v_1e_1v_2e_2v_3e_3\dots e_{n-1}v_n.$ For a vertex $v\in V(G)$, $d_G(v) = k$ add the path $P_k$ as an \emph{ear} to vertex $v$ by adding the edges $vv_1$, $vv_n$. An \emph{path-eared} graph is denoted, $G^\mathcal{P}$. Assume the chromatic number of paths is the invariant. Define total $\chi$-irregularity or $\chi_t$-irregularity as $\chi_t(G^\mathcal{P}) =\frac{1}{2}\sum \limits_{u,v \in V(G)}|\chi(P_{d(u)}) - \chi(P_{d(v)})|.$ Furthering a study for certain classes of graphs $\mathcal{H}$ with $H\in \mathcal{H},$ $|V(H)| \geq 2$ to be \emph{H-eared} to the vertices of $G$ and selecting any invariant $\mu$ to define $\mu_t$-irregularity is worthy. Furthemore, if a meaningful definition for edge labeling can be found for a $H$-eared graph, a new field of total graph labeling may be researched.\\\\
\noindent \textbf{\emph{Open access:}} This paper is distributed under the terms of the Creative Commons Attribution License which permits any use, distribution and reproduction in any medium, provided the original author(s) and the source are credited. \\ \\
References (Limited) \\ \\
$[1]$  H. Abdo, S. Brandt, H. Dimitrov, \emph{The total irregularity of a graph,} Discrete Mathematics and Theoretical Computer Science, Vol 16(1), (2014), 201-206. \\ 
$[2]$  H. Abdo,  D. Dimitrov, \emph{The total irregularity of a graph under graph operations,} Miskolc Mathematical Notes, Vol 15 (1) (2014) 3-17. \\ 
$[3]$ J.A. Bondy, U.S.R. Murty, \textbf{Graph Theory with Applications,} Macmillan Press, London, (1976). \\
$[4]$ J. Kok, P. Fisher,  B. Wilkens,  M. Mabula, V. Mukungunugwa, \emph{Characteristics of Finite Jaco Graphs, $J_n(1), n \in \Bbb N$}, arXiv: 1404.0484v1 [math.CO], 2 April 2014. \\
$[5]$  J. Kok, P. Fisher,  B. Wilkens,  M. Mabula, V. Mukungunugwa, \emph{Characteristics of Jaco Graphs, $J_\infty(a), a \in \Bbb N$}, arXiv: 1404.1714v1 [math.CO], 7 April 2014. \\
$[6]$ J. Kok, C. Susanth, S.J. Kalayathankal, \emph{A Study on Linear Jaco Graphs}, Journal of Informatics and Mathematical Sciences, Vol 7(2), (2015), 69-80.\\
$[7]$ Y. Zhu,  L. You, J. Yang, \emph{The Minimal Total Irregularity of Graphs,} arXiv: 1404.0931v1 [math.CO], 3 April 2014. \\ 

\end{document}